\theoremstyle{theorem}
\newtheorem{theorem}{Theorem}[section]
\newtheorem{proposition}[theorem]{Proposition}
\newtheorem{lemma}[theorem]{Lemma}
\newtheorem{question}[theorem]{Question}
\newtheorem{corollary}[theorem]{Corollary}
\theoremstyle{definition}
\newtheorem{remark}[theorem]{Remark}
\newcommand{\Z}{\mathbb{Z}}
\newcommand{\N}{\mathbb{N}}
\newcommand{\X}{\times}
\newcommand{\col}{\text{col}}
\newcommand{\va}{\vec{\mathbf{a}}}
\newcommand{\vb}{\vec{\mathbf{b}}}
\newcommand{\vs}{\vec{\mathbf{s}}}
\newcommand{\ve}{\vec{\mathbf{e}}}
\newcommand{\vx}{\vec{\mathbf{x}}}
\newcommand{\vz}{\vec{\mathbf{0}}}
\newcommand{\nullity}{\text{nullity}}
\definecolor{dgray}{gray}{0.35}
\definecolor{PURP}{RGB}{.6,.2,.8}
\def\@seccntformat#1{%
  \protect\textup{\protect\@secnumfont
    \ifnum\pdfstrcmp{subsection}{#1}=0 \bfseries\fi
    \csname the#1\endcsname
    \protect\@secnumpunct
  }%
}  
\newtheorem*{rep@theorem}{\rep@title}
\newcommand{\newreptheorem}[2]{%
\newenvironment{rep#1}[1]{%
 \def\rep@title{#2 \ref{##1}}%
 \begin{rep@theorem}}%
 {\end{rep@theorem}}}
\begin{document}

\rhead{\thepage}
\lhead{\author}
\thispagestyle{empty}


\raggedbottom
\pagenumbering{arabic}
\setcounter{section}{0}


\title{Colorings of symmetric unions and partial knots}

\author{Ben Clingenpeel} %
\address{Department of Mathematics, The George Washington University, Washington DC, USA}
\email{ben.clingenpeel@gwmail.gwu.edu}

\author{Zongzheng Dai} %
\address{Department of Mathematics, Rensselaer Polytechnic Institute, Troy, NY, USA}
\email{daiz4@rpi.edu}

\author{Gabriel Diraviam} %
\address{}
\email{gabe.math01@gmail.com}

\author{Kareem Jaber} %
\address{Department of Mathematics, Princeton University, Princeton, NJ, USA}
\email{kj5388@princeton.edu}

\author{Krishnendu Kar} %
\address{Department of Mathematics, Louisiana State University, Baton Rouge, LA, USA}
\email{kkar2@lsu.edu}

\author{Ziyun Liu} %
\address{Department of Mathematical Sciences, Carnegie Mellon University, Pittsburgh, PA, USA}
\email{ziyunliu@andrew.cmu.edu}

\author{Teo Miklethun} %
\address{}
\email{teomiklethun@gmail.com}

\author{Haritha Nagampoozhy} %
\address{Indian Institute of Science Education and Research, Pune, India}
\email{harithnag@gmail.com}

\author{Michael Perry} %
\address{Department of Mathematics, Syracuse University, Syrcause, NY, USA}
\email{mperry03@syr.edu}

\author{Moses Samuelson-Lynn} %
\address{Max Planck Institute for Mathematics in the Sciences, Leipzig, Germany}
\email{m.samuelsonlynn@utah.edu}

\author{Eli Seamans} %
\address{Department of Mathematics, Syracuse University, Syrcause, NY, USA}
\email{ewseaman@syr.edu}

\author{Ana Wright} %
\address{Department of Mathematics and Computer Science, Davidson College, Davidson, NC, USA}
\email{anwright1@davidson.edu}

\author{Nicole (Xinyu) Xie}
\address{Department of Mathematics, University of Nebraska-Lincoln, Lincoln, NE, USA}
\email{nxie2@huskers.unl.edu}

\author{Ruiqi Zou} %
\address{Department of Mathematics, University of Virginia, Charlottesville, VA, USA}
\email{nbc3hn@virginia.edu}

\author{Alexander Zupan} %
\address{Department of Mathematics, University of Nebraska-Lincoln, Lincoln, NE, USA}
\email{zupan@unl.edu}
\urladdr{https://math.unl.edu/person/alex-zupan/}

\begin{abstract}
Motivated by work of Kinoshita and Teraska, Lamm introduced the notion of a \emph{symmetric union}, which can be constructed from a \emph{partial knot} $J$ by introducing additional crossings to a diagram of $J \# -\!J$ along its axis of symmetry.   If both $J$ and $J'$ are partial knots for different symmetric union presentations of the same ribbon knot $K$, the knots $J$ and $J'$ are said to be \emph{symmetrically related}.  Lamm proved that if $J$ and $J'$ are symmetrically related, then $\det J = \det J'$, asking whether the converse is true.  In this article, we give a negative answer to Lamm's question, constructing for any natural number $m$ a family of $2^m$ knots with the same determinant but such that no two knots in the family are symmetrically related.  This result is a corollary to our main theorem, that if $J$ is the partial knot in a symmetric union presentation for $K$, then $\text{col}_p(J) \leq \text{col}_p(K) \leq \frac{(\text{col}_p(J))^2}{2}$, where $\col_p(\cdot )$ denotes the number of $p$-colorings of a knot.
\end{abstract}

\maketitle

\section{Introduction}

The slice-ribbon problem of Fox has been a significant motivator in low-dimensional topology for over 60 years~\cite{fox}:  Every knot $K \subset S^3$ that bounds an immersed disk with ribbon intersections, a \emph{ribbon disk}, can be seen to bound a smoothly embedded disk in $B^4$, a \emph{slice disk}.  In other words, every ribbon knot is slice, but it remains open whether the converse is true.  Building on work of Kinoshita and Terasaka~\cite{kt}, Lamm introduced the idea of a symmetric union, a construction obtained by starting with a knot $J$, called a \emph{partial knot}, and adding crossings to a diagram of the ribbon knot $J \# -\!J$ (where $-J$ denotes the mirror image of $J$) along a decomposing sphere~\cite{lamm1}.  Analogous to what happens in the slice-ribbon problem, it is straightforward to show that if $K$ admits a symmetric union presentation, then $K$ is a ribbon knot, and we call $K$ a \emph{symmetric ribbon knot}.  However, the following is open:

\begin{question}\cite{lamm1}\label{q1}
Is every ribbon knot a symmetric ribbon knot?
\end{question}

In his introductory paper on the subject~\cite{lamm1}, Lamm established a number of results relating the invariants of a symmetric ribbon knot $K$ with those of a corresponding partial knot $J$.  In particular, $\det(K) = (\det(J))^2$.  In another paper answering Question~\ref{q1} in the affirmative for two-bridge knots $K$, Lamm asked a different question:

\begin{question}\cite{lamm2}\label{q2}
Suppose $\det(J) = \det(J')$.  Must there exist a knot $K$ admitting symmetric union presentations with partial knots $J$ and $J'$?
\end{question}

If such a $K$ exists, we say that $J$ and $J'$ are \emph{symmetrically related}.  In the work at present, we compare the number of admissible $p$-colorings $\col_p(K)$ and $\col_p(J)$ of a symmetric ribbon knot $K$ and its partial knot, respectively.  We prove

\begin{theorem}\label{thm:main}
Suppose $K$ admits a symmetric union presentation with partial knot $J$.  Then
\[ \col_p(J) \leq \col_p(K) \leq \frac{(\col_p(J))^2}{p}.\]
\end{theorem}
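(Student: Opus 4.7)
The plan is to decompose the coloring module $V(K)$ using the $\pi$-rotation symmetry of the symmetric union about its axis. For odd $p$, the induced involution $\sigma$ on $V(K)$ yields a splitting $V(K) = V^+ \oplus V^-$ into $\pm 1$-eigenspaces, and I would establish both bounds by analyzing each eigenspace via the partial knot $J$.

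Decompose the diagram of $K$ into a left-half tangle $T$ (with $2m$ endpoints on the axis), an axis tangle $B$ containing the added crossings, and the mirror tangle $\bar{T}$; the partial knot $J$ is the closure of $T$ obtained by pairing its axis endpoints with trivial arcs. Write $\partial\colon V(T) \to \mathbb{F}_p^{2m}$ for the boundary-restriction map, $U = \mathrm{image}(\partial)$, $V_0(T) = \ker \partial$, and $W \subset \mathbb{F}_p^{2m}$ for the subspace of paired vectors. The axis tangle $B$ induces a linear endomorphism $L_B$ of $\mathbb{F}_p^{2m}$, and direct counting gives $|V(J)| = |V_0(T)| \cdot |U \cap W|$ and $|V(K)| = |V_0(T)|^2 \cdot |U \cap L_B^{-1}(U)|$.

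For the lower bound $\col_p(J) \leq \col_p(K)$, I would construct an injection $V(J) \hookrightarrow V^+$ by symmetric extension: a $J$-coloring mirrors across the axis and extends uniquely across $B$ since each added axis crossing is a twist between the two strands of a single pair, which preserves paired boundary data.

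For the upper bound, analyze each eigenspace. A coloring in $V^-$ has every axis arc equal to zero (forced by the $-1$-eigenvalue condition together with $\sigma$ fixing axis arcs), so it restricts to a zero-boundary coloring of $T$; hence $V^- \cong V_0(T)$, which embeds in $V(J)$ with image disjoint from the 1-dimensional space of constant colorings, giving $\dim V^- \leq \dim V(J) - 1$. A coloring in $V^+$ corresponds to $c_T \in V(T)$ with $\partial c_T \in U \cap \ker(L_B - I)$; I would show $V^+ \cong V(J)$ by proving $U \cap \ker(L_B - I) = U \cap W$, using that each elementary axis twist has fixed subspace equal to its pair's diagonal, so composing twists forces $L_B$-fixed vectors in $U$ to be paired in every pair containing at least one added crossing. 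The main obstacle is controlling pairs with no added crossings, where the local block of $L_B$ is the identity and the pairing constraint must come instead from the tangle structure of $T$ itself; here I would exploit the Lagrangian property of $U$ under the natural symplectic form on $\mathbb{F}_p^{2m}$ arising from the knot closure of $J$. Combining, $\dim V(K) = \dim V^+ + \dim V^- \leq 2 \dim V(J) - 1$, which gives $\col_p(K) \leq \col_p(J)^2/p$.
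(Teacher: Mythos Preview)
Your argument rests on an involution $\sigma$ of $V(K)$ coming from a $\pi$-rotation symmetry of the symmetric union, but no such diagram symmetry exists once axis crossings are present: planar reflection across $\ell$ sends an $n_i$-twist to a $(-n_i)$-twist, and $\pi$-rotation about $\ell$ in $3$-space exchanges $D$ with the crossing-reversed reflection of $D$, which is \emph{not} the diagram $-D$. In your own framework this is visible in $L_B$: for a single axis crossing one computes $L_B=\left(\begin{smallmatrix}0&1\\-1&2\end{smallmatrix}\right)$, a nontrivial Jordan block with sole eigenvalue $1$, so $L_B^2\neq I$ and the swap $(c_T,c_{\bar T})\mapsto(c_{\bar T},c_T)$ does not carry colorings of $K$ to colorings of $K$. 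Comparing your two counting formulas gives
\[
\dim V(K)-\bigl(\dim V^{+}+\dim V^{-}\bigr)=\dim\bigl(U\cap L_B^{-1}U\bigr)-\dim(U\cap W),
\]
and the paper's Proposition~\ref{prop:notsymm} exhibits precisely a coloring with $\partial c_T\in(U\cap L_B^{-1}U)\setminus W$; in that example the right-hand side is positive and $V^{+}\oplus V^{-}\subsetneq V(K)$. Your Lagrangian remark addresses a different issue (pairs with no added crossing) and does not repair this.

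The paper avoids any appeal to symmetry. After arranging that each $n_i=\pm1$, Proposition~\ref{prop:matrix} performs explicit row and column operations on the coloring matrix of $D\sqcup -D(n_1,\dots,n_k)$ to show it has the same mod-$p$ nullity as a block upper-triangular matrix $\left[\begin{smallmatrix}M_D & A\\ 0 & M_D'\end{smallmatrix}\right]$, where $M_D'$ is $M_D$ with one column zeroed and the first column of $A$ is $\ve_1$. The upper bound then follows by comparing ranks with the analogous matrix for $J\#-\!J$, in which $A$ is replaced by a matrix with a single nonzero entry; the block structure forces every pivot of the $J\#-\!J$ matrix to persist as a pivot for $K$.
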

As a corollary, we obtain a negative answer to Question~\ref{q2}.

\begin{corollary}\label{cor:main}
For any $m \in \N$, there exists a collection $\mathcal{K}$ of $2^m$ knots with the same determinant but such that for any $K,K' \in \mathcal{K}$, we have that $K$ and $K'$ are not symmetrically related.
\end{corollary}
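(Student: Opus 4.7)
The plan is to construct $\mathcal{K}$ by connect-summing carefully chosen knots indexed by subsets of $\{1, \ldots, m\}$, then deploy Theorem~\ref{thm:main} as a $p$-coloring obstruction to symmetric relatedness.

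First I would fix $m$ distinct odd primes $p_1, \ldots, p_m$ and produce, for each $i$, two knots $A_i, B_i$ with $\det(A_i) = \det(B_i) = p_i^3$ but $\col_{p_i}(A_i) = p_i^2$ and $\col_{p_i}(B_i) = p_i^4$. Concretely, $A_i$ can be taken to be the $(2, p_i^3)$-torus knot, whose double branched cover has $H_1 = \Z/p_i^3$, while $B_i$ can be taken to be the three-fold connect sum of the $(2, p_i)$-torus knot, whose double branched cover has $H_1 = (\Z/p_i)^3$. The identity $\col_p(K) = p \cdot |\mathrm{Hom}(H_1(\Sigma_2(K)), \Z/p)|$ then yields the claimed color counts.

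Second, for each subset $S \subseteq \{1, \ldots, m\}$ I would define
\[ J_S = C_1^S \# C_2^S \# \cdots \# C_m^S, \qquad C_i^S = \begin{cases} A_i, & i \in S, \\ B_i, & i \notin S. \end{cases} \]
Multiplicativity of determinants gives $\det(J_S) = \prod_{i=1}^m p_i^3$, independent of $S$. The connect-sum formula $\col_p(K_1 \# K_2) = \col_p(K_1)\col_p(K_2)/p$, together with the observation that $\col_{p_i}(C_j^S) = p_i$ whenever $j \neq i$ (since $p_i \nmid \det C_j^S = p_j^3$), then gives $\col_{p_i}(J_S) = p_i^2$ if $i \in S$ and $\col_{p_i}(J_S) = p_i^4$ otherwise. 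In particular, distinct subsets give distinct knots.

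Third, suppose toward a contradiction that $J_S$ and $J_{S'}$ are symmetrically related via a common ribbon knot $K$ for some $S \neq S'$. Choose $i \in S \triangle S'$; without loss of generality $i \in S$ and $i \notin S'$. Applying Theorem~\ref{thm:main} with partial knot $J_S$ gives $\col_{p_i}(K) \leq (p_i^2)^2/p_i = p_i^3$, while applying it with partial knot $J_{S'}$ gives $\col_{p_i}(K) \geq p_i^4$, a contradiction. Hence the $2^m$ knots in $\mathcal{K} = \{J_S : S \subseteq \{1, \ldots, m\}\}$ all share the determinant $\prod p_i^3$ and are pairwise not symmetrically related. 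The main obstacle is calibrating the gap between $\col_{p_i}(A_i)$ and $\col_{p_i}(B_i)$ so that the quadratic upper bound in Theorem~\ref{thm:main} genuinely defeats the lower bound coming from the other partial knot: the exponent pair $(2,4)$ is the smallest that works, which forces the choice of determinant $p_i^3$, and realizing two knots with this common determinant and prescribed coloring counts is the only non-formal step, handled by the torus-knot construction above.
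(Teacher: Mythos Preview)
Your proof is correct and follows essentially the same strategy as the paper's: build, at each prime, a pair of knots with equal determinant but widely separated $p$-coloring counts using a $2$-bridge torus knot versus a connect sum of torus knots, then connect-sum across primes indexed by binary strings and apply Theorem~\ref{thm:main}. Your version is marginally tighter---using determinant $p_i^3$ with the coloring pair $(p_i^2,p_i^4)$ rather than the paper's $p_i^4$ with $(p_i^2,p_i^5)$---and you compute the coloring counts via $H_1(\Sigma_2(K))$ rather than via the paper's Lemmas~\ref{lem:detcol} and~\ref{lem:bridge}, but these are cosmetic differences.
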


\begin{remark}
There exist examples of a symmetric ribbon knot $K$ with a partial knot $J$ such that $\col_p(J) = \col_p(K)$, and there are also examples of $K$ and $J$ such that $\col_p(K) = \frac{(\col_p(J))^2}{p}$.  Thus, it is possible to attain equality at either end of the range given in Theorem~\ref{thm:main}.  See Remark~\ref{rmk:range} below for more details.
\end{remark}

\begin{remark}
While Question~\ref{q2} is relatively unexplored, Question~\ref{q1} has received considerable attention in the literature.  For example, in~\cite{aceto}, Aceto showed that the difference between ribbon number and symmetric ribbon number can be arbitrarily large.  In~\cite{lamm3}, Lamm searched for and catalogued potential ribbon knots up to 12 crossings for which no symmetric union presentation has been found.  Finally, a recent paper of Boileau, Kitano, and Nozaki proved that if the ribbon knot $11a_{201}$ identified in Lamm's search does indeed admit a symmetric union presentation, then the corresponding partial knot must be either $6_1$ or $9_1$~\cite{BKN}.
\end{remark}

The plan of the article is as follows:  In Section~\ref{sec:prelim}, we establish the necessary preliminaries and discuss a strange example, showing that the proof of Theorem~\ref{thm:main} requires more than expected.  In Section~\ref{sec:proof}, we prove Theorem~\ref{thm:main} and Corollary~\ref{cor:main}.  Finally, in Section~\ref{sec:quest}, we include open questions for future consideration. \\

\noindent \textbf{Acknowledgements:}  We express our deep gratitude to the 2022 Polymath Jr. REU, during which this work was completed, for inducing a supportive, collaborative research environment.  We are grateful to Michel Boileau and Christoph Lamm for helpful comments on a draft of this work.  Finally, we thank Michel Boileau for mentioning this problem during his talk~\cite{boileau} as part of the First International On-line Knot Theory Congress, which gave us the push we needed to finish this project.  AZ was supported by NSF awards DMS-2005518 and DMS-2405301 and a Simons Foundation Travel Award.

\section{Preliminaries}\label{sec:prelim}

First, we formally define symmetric unions, introduced by Lamm~\cite{lamm1} and motivated by work of Kinoshita and Teraska~\cite{kt}.  Let $D$ be a knot diagram, and consider the reflection $-D$ of $D$ over a vertical axis $\ell$ disjoint from $D$.  A symmetric union is obtained by performing specific tangle replacements on the diagram $D \sqcup -D$ as follows:  Consider $k+1$ tangles determined by disks $\Delta_0,\dots,\Delta_k$ in the projection plane, where $\Delta_i$ has reflection symmetry over $\ell$ and meets each of $D$ and $-D$ in a single crossing-less strand.  For integers $n_1,\dots,n_k$, let $D \sqcup -D(n_1,\dots,n_k)$ denote the diagram obtained by replacing the tangle in $\Delta_0$ with the $\infty$-tangle and by replacing the tangle in $\Delta_i$ with a tangle diagram consisting of $n_i$ (signed) crossings contained in the axis of symmetry $\ell$.  A schematic diagram is shown in Figure~\ref{fig:sup}.

\begin{figure}[h!]
  \centering
  \includegraphics[width=.6\linewidth]{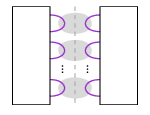}
  \put (-133,9) {\textcolor{dgray}{$\ell$}}
    \put (-146,118) {\textcolor{dgray}{$\Delta_1$}}
    \put (-146,165) {\textcolor{dgray}{$\Delta_0$}}
    \put (-146,53) {\textcolor{dgray}{$\Delta_k$}}
    \put (-216,98) {\Huge{$D$}}
    \put (-73,98) {\Huge{$-D$}}
	\caption{The general form used in the construction of a symmetric union.}
	\label{fig:sup}
\end{figure}

A \emph{symmetric union presentation} for a knot $K$ is a realization of $K$ as a symmetric union of the form $D \sqcup -D(n_1,\dots,n_k)$.  The knot type $J$ of the diagram $D$ is called a \emph{partial knot} for $K$.  As an example, in Figure~\ref{fig:steve} we see symmetric union presentations $D \sqcup -D(1)$ and $D \sqcup -D(3)$ for the knots $K = 6_1$ and $K = 9_{46}$, respectively, with partial knot $J$ the trefoil $3_1$.

\begin{figure}[h!]
  \centering
  \includegraphics[width=.45\linewidth]{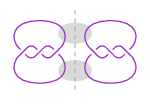} \\
      \includegraphics[width=.4\linewidth]{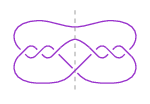}
  \qquad
    \includegraphics[width=.4\linewidth]{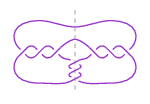}
	\caption{At top, an initial setup to construct a symmetric union with partial knot the trefoil.  At bottom left, a symmetric union $D \sqcup -D(1)$ for the knot $K = 6_1$.  At bottom right, a symmetric union $D \sqcup -D(3)$ for the knot $K = 9_{46}$.}
	\label{fig:steve}
\end{figure}

In a well-known construction, every knot $K$ that admits a symmetric union presentation is a ribbon knot.  As mentioned above in Question~\ref{q1}, at the present it is unknown whether every ribbon knot admits a symmetric union presentation.

Given a diagram $D$, a \emph{$p$-coloring} of $D$ is an assignment of an element of $\Z_p$ to each connected strand of $D$ with the rule that at each crossing, if the overstrand is labeled $a_i$ and the two understrands are labeled $a_j$ and $a_k$, then
\[ a_j + a_k \equiv 2 a_i \, \text{mod } p \quad \text{ or, equivalently, } \quad  a_j + a_k - 2 a_i \equiv 0 \, \text{mod } p.\]
Let $\text{col}_p(D)$ denote the number of $p$-colorings of a diagram $D$, where we count both \emph{trivial} $p$-colorings (in which every strand has the same label) and \emph{non-trivial} $p$ colorings.  It is well-known that $\text{col}_p(D)$ is preserved under Reidemeister moves, and thus, we can define the knot invariant $\text{col}_p(K)$ to be $\text{col}_p(D)$ for any diagram $D$ corresponding to $K$.  Because every diagram admits $p$ trivial colorings, we have $\col_p(K) \geq p$ for all knots $K$.  When $\col_p(K) > p$ (that is, when $K$ admits a non-trivial $p$-coloring), we say that $K$ is \emph{$p$-colorable}.

See, for example, \cite{prz} for further details and background, including the following proposition, which relates $p$-colorings of a connected sum to $p$-colorings of its summands.

\begin{proposition}\cite{prz}\label{prop:sum}
Let $K$ and $K'$ be any knots.  Then
\[ \text{col}_p(K \# K') = \frac{\col_p(K) \col_p(K')}{p}.\]
\end{proposition}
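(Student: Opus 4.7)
The plan is to form an explicit connected sum diagram and count $p$-colorings by partitioning according to the color on the splice. Fix diagrams $D$ for $K$ and $D'$ for $K'$, pick short arcs $\alpha, \alpha'$ in overpasses $s$ of $D$ and $s'$ of $D'$, excise them, and reattach the endpoints in a small splice region to form $D \# D'$, a diagram of $K \# K'$. Since the splice introduces no new crossings, the coloring relations of $D \# D'$ are precisely those of $D$ and $D'$, phrased in terms of two merged overpasses $a = s_1 \cup s_1'$ and $b = s_2 \cup s_2'$ which replace $s$ and $s'$, where $s_1, s_2$ (respectively $s_1', s_2'$) are the two pieces of $s$ (resp.\ $s'$) remaining after excision.

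The first key step I would establish is that in any $p$-coloring of $D \# D'$, one must have $a = b$. This amounts to the linear-algebraic claim that the \emph{split} coloring matrix of $D$, obtained by treating $s_1, s_2$ as independent strands, has the same $\Z_p$-nullity as the ordinary coloring matrix of $D$; equivalently, that splitting increases the rank by exactly one. I would prove this by examining the two coloring relations at the endpoints of $s$ (where $s$ is an under-strand at two distinct crossings): these relations, together with the fact that the constant vector spans the one-dimensional right null-space of the original coloring matrix over $\Q$, force any split-coloring to satisfy $a - b = 0$.

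Once $a = b$ is secured, $p$-colorings of $D \# D'$ biject with pairs $(\phi, \phi')$, where $\phi$ is a $p$-coloring of $D$, $\phi'$ is a $p$-coloring of $D'$, and $\phi(s) = \phi'(s')$. To count such pairs, I would exploit the free $\Z_p$-action on the abelian group of $p$-colorings of any diagram given by adding a global constant; this action is free because evaluating any coloring at a single fixed strand recovers the constant. Consequently, for any fixed $c \in \Z_p$ the set of $p$-colorings of $D$ with $\phi(s) = c$ has cardinality $\col_p(K)/p$, and likewise $\col_p(K')/p$ for $D'$. Summing over the $p$ possible common values on the spliced overpass gives
\[ \col_p(K \# K') \;=\; \sum_{c \in \Z_p} \frac{\col_p(K)}{p} \cdot \frac{\col_p(K')}{p} \;=\; \frac{\col_p(K)\,\col_p(K')}{p}. \]

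The sole non-routine step is the $a = b$ claim, which depends on the behavior of the coloring matrix under splitting a strand of a knot diagram; the remainder is elementary bookkeeping using the $\Z_p$-action by constant shifts.
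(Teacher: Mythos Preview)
The paper does not prove this proposition; it is quoted from Przytycki~\cite{prz}. The paper does, however, remark that Przytycki's proof ``reveals that every admissible $p$-coloring of the diagram $D \# -\!D$ is symmetrically compatible,'' i.e., that the two arcs crossing the connect-sum sphere receive the same color. Your strategy---establish $a=b$ and then count by fibering over the common color using the free $\Z_p$-action by constant shifts---is exactly this standard approach, and your counting step is correct.

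The gap is in your justification of $a=b$. You appeal to the fact that the constant vector spans the one-dimensional null space of $M_D$ over $\Q$, but this rational statement does not by itself control the $\Z_p$-null space of the split matrix $\tilde M$. Concretely, what you need is that the row vector $\ve_{s_1}-\ve_{s_2}$ lies in the $\Z_p$-row space of $\tilde M$ for \emph{every} prime $p$; equivalently, that it lies in the \emph{integer} row space. Knowing only that the $\Q$-nullity of $M_D$ is $1$ tells you the $\Q$-row space of $\tilde M$ is the full hyperplane $\{\sum x_i=0\}$, but the integer row space is generally a proper finite-index sublattice of that hyperplane, and you have not shown $\ve_{s_1}-\ve_{s_2}$ lands in it. The clean way to close this gap is topological: the tangle complement $B^3\setminus T$ obtained by excising a trivial ball--arc pair from $(S^3,K)$ has $\pi_1(B^3\setminus T)\cong\pi_1(S^3\setminus K)$, and under this isomorphism the meridians of $s_1$ and $s_2$ coincide (they cobound the annulus $\partial B^3\setminus K$). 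Hence any Fox coloring, viewed as a dihedral representation, assigns them the same reflection, forcing $a=b$ over every $\Z_p$. Once you replace your rational-nullity sketch with this argument (or an equivalent integral one), the proof is complete.
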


As an immediate corollary, we have

\begin{corollary}\label{cor:notcolor}
Let $K$ and $K'$ be any knots such that $K'$ is not $p$-colorable.  Then
\[ \text{col}_p(K \# K') = \col_p(K).\]
\end{corollary}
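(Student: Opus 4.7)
The plan is to derive this directly from Proposition~\ref{prop:sum} together with the definition of $p$-colorability given just before it. Recall that every diagram admits $p$ trivial $p$-colorings (one for each choice of constant label in $\Z_p$), so $\col_p(K') \geq p$ for every knot $K'$, with equality precisely when $K'$ admits no non-trivial $p$-coloring, i.e., when $K'$ is not $p$-colorable.

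The argument itself is just a one-line computation. First, I would invoke Proposition~\ref{prop:sum} to write
\[
\col_p(K \# K') \;=\; \frac{\col_p(K)\,\col_p(K')}{p}.
\]
Then, using the hypothesis that $K'$ is not $p$-colorable together with the observation above, I would substitute $\col_p(K') = p$ into the right-hand side, obtaining
\[
\col_p(K \# K') \;=\; \frac{\col_p(K) \cdot p}{p} \;=\; \col_p(K),
\]
which is the desired equality.

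There is effectively no obstacle here, since the content is entirely contained in Proposition~\ref{prop:sum}; the only thing to verify is the (immediate) identification of ``not $p$-colorable'' with ``$\col_p = p$,'' which is precisely how the term was defined in the preceding paragraph. I would therefore present the corollary with a short two-sentence proof: one sentence noting that non-$p$-colorability of $K'$ forces $\col_p(K') = p$, and one sentence applying Proposition~\ref{prop:sum}.
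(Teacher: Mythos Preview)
Your argument is correct and is exactly what the paper intends: the corollary is stated there without proof as ``an immediate corollary'' of Proposition~\ref{prop:sum}, and your substitution $\col_p(K') = p$ is the one-line justification.
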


We will also require the following well-known result relating $p$-colorability and knot determinants.  For a proof, see Chapter 3 of~\cite{livingston}, for instance.

\begin{lemma}\label{lem:detcol}
A knot $K$ is $p$-colorable if and only if $p$ divides $\det(K)$.
\end{lemma}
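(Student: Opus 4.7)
The plan is to introduce the coloring matrix of a diagram $D$ of $K$ and reduce the equivalence to a linear-algebraic statement. Label the $n$ arcs of $D$ by $a_1,\dots,a_n$ and form the $n \times n$ integer coloring matrix $M$ whose row for a crossing with overstrand $a_i$ and understrands $a_j, a_k$ has a $-2$ in column $i$, a $+1$ in columns $j$ and $k$, and zeros elsewhere. Under this setup, a $p$-coloring of $D$ is precisely an element of $\ker(M)$ when $M$ is reduced modulo $p$, so $\col_p(K) = p^{\nullity(M \bmod p)}$.

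The next step is to note that each row of $M$ sums to zero (since $1 + 1 - 2 = 0$), so the all-ones vector always lies in $\ker(M)$ and accounts for the $p$ trivial colorings. Consequently, $K$ is $p$-colorable if and only if $\nullity(M \bmod p) \geq 2$, equivalently, if and only if $\rank(M \bmod p) \leq n-2$, which in turn is equivalent to the vanishing modulo $p$ of every $(n-1) \times (n-1)$ minor of $M$.

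To close the argument, I would invoke the standard fact (see, e.g., Chapter 3 of~\cite{livingston}) that any such $(n-1) \times (n-1)$ minor of $M$ has absolute value equal to $\det(K)$. Granting this identification, $K$ is $p$-colorable if and only if $p \mid \det(K)$, as desired. The main obstacle is the independence of the minor from the choice of deleted row and column; this is typically proven either by recognizing $M$ as a presentation matrix for $\Z \oplus H_1(\Sigma_2(K))$ (where $\Sigma_2(K)$ is the double branched cover, whose first homology has order $|\det(K)|$), or via a direct combinatorial argument with row and column operations tracking the arcs of $D$. Either route lies at the heart of the classical definition of the knot determinant, and once established the equivalence in the lemma follows immediately from the dimension count above.
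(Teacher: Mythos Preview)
The paper does not actually prove this lemma; it merely states it as well-known and refers the reader to Chapter~3 of Livingston~\cite{livingston}. Your sketch is a correct outline of precisely the standard argument found there---reducing $p$-colorability to $\nullity_p(M_D)\geq 2$, then to the vanishing modulo~$p$ of the $(n-1)\times(n-1)$ minors, and finally identifying those minors with $\pm\det(K)$---and in fact supplies more detail than the paper itself. There is nothing to compare: your approach is the same as the one the paper defers to.
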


We need one more lemma as input.  A knot $K \subset S^3$ is \emph{2-bridge} if there exists a 2-sphere $\Sigma$ splitting $K$ into two trivial 2-strand tangles, $(S^3,K) = (B^3,\tau_1) \cup_{\Sigma} (B^3,\tau_2)$, in which the strands of $\tau_i$ are simultaneously isotopic rel boundary into $\Sigma$.  For example, any $(p,2)$-torus knot is a 2-bridge knot.  Przytycki proved

\begin{lemma}\label{lem:bridge}\cite[Corollary 1.7]{prz}
If $K$ is a 2-bridge knot, then $\col_p(K) \leq p^2$.
\end{lemma}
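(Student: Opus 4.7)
The plan is to exploit the plat presentation of 2-bridge knots in order to parameterize $p$-colorings by just two elements of $\Z_p$. Since $K$ is 2-bridge, I would fix a diagram of $K$ as the plat closure of a 4-braid $\beta$: four vertical strands joined at the top by two arcs (connecting strands 1--2 and 3--4) and at the bottom by two arcs (similarly), with $\beta$ sandwiched between. Given any $p$-coloring of this diagram, let $a, b \in \Z_p$ denote the colors of the two top arcs.

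Because the top arc joining strands 1 and 2 continues into both strands as they descend into $\beta$, the four strands enter the braid with colors $(a, a, b, b)$. I would then propagate the coloring downward: at each crossing in $\beta$, the Fox relation $u_{\mathrm{out}} = 2o - u_{\mathrm{in}}$ updates the under-strand color linearly in $(o, u_{\mathrm{in}})$, so by induction on the number of crossings traversed, the color of every arc in the diagram is a $\Z_p$-linear function of $(a, b)$. In particular, the colors $(c_1, c_2, c_3, c_4)$ appearing at the bottoms of the four strands are $\Z_p$-linear in $(a, b)$. The two bottom arcs then impose the requirements $c_1 = c_2$ and $c_3 = c_4$, and since every $p$-coloring of $K$ corresponds to a pair $(a, b) \in \Z_p^2$ satisfying these constraints, $\col_p(K) \leq p^2$.

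The main subtlety is the propagation step: one has to confirm that every arc in the diagram is genuinely reached by this downward propagation with no hidden degree of freedom (for instance, from a top arc that never enters $\beta$), which amounts to a careful bookkeeping of arcs in a 4-plat. An alternative, more algebraic route would first use Lemma~\ref{lem:detcol} to dispose of the case $p \nmid \det(K)$ (where $\col_p(K) = p$), and then invoke the classical fact that the double branched cover of a 2-bridge knot is a lens space, so that $H_1(\Sigma_2(K); \Z)$ is cyclic of order $\det(K)$; identifying the coloring module with $\Z_p$ plus the $p$-torsion of $H_1(\Sigma_2(K))$ then yields $\col_p(K) \in \{p, p^2\}$ directly. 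The plat argument above, however, has the virtue of being elementary and self-contained.
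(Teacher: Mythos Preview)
Your argument is correct. The plat presentation does exactly what you claim: the top arcs determine two colors $(a,b)\in\Z_p^2$, and since each braid generator acts on the 4-tuple of strand colors by an affine $\Z_p$-linear map (the Fox relation $u_{\mathrm{out}}=2o-u_{\mathrm{in}}$), every arc color is an affine function of $(a,b)$. The bottom-arc constraints only cut down the solution set, so $\col_p(K)\le p^2$. The bookkeeping worry you flag is not an issue in a genuine 4-plat: by definition each of the four braid strands enters the braid region immediately after leaving a top arc, so no extra free parameters appear.

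Note, however, that the paper does not actually give a proof of this lemma; it simply cites Przytycki~\cite[Corollary~1.7]{prz} and moves on. Your plat argument is in fact the standard proof of Przytycki's more general statement that a $b$-bridge knot satisfies $\col_p(K)\le p^b$, specialized to $b=2$. So rather than being a different route from the paper's proof, your write-up supplies the proof the paper omits, and it matches the cited source. The algebraic alternative via the lens-space double branched cover is also fine and yields the slightly sharper conclusion $\col_p(K)\in\{p,p^2\}$, but it is not needed here.
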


Now, we consider $p$-colorings of symmetric union presentations.  Suppose that $K$ admits a symmetric union presentation with partial knot $J$, and suppose that a diagram $D$ for $J$ is $p$-colored.  Then we can $p$-color the mirror image $-D$ identically, so that for any disk $\Delta_j$ as labeled above, both strands of $D\sqcup -D$ meeting $\Delta_j$ have the same color.  Replacing these strands with an $\infty$-tangle or some number of crossings, all of which have the same color, produces an admissible coloring of $D\sqcup -D(n_1,\dots,n_k)$, and so we can see that
\[ \col_p(J) \leq \col_p(K),\]
establishing one of the inequalities in Theorem~\ref{thm:main}.  See Figure~\ref{fig:color1} for an example in which $p=3$, in which case we can use three actual colors, and any assignment of labels in $\Z_3$ to the three colors suffices.

\begin{figure}[h!]
  \centering
   \includegraphics[width=.2\linewidth]{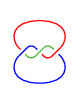} \qquad
 \includegraphics[width=.4\linewidth]{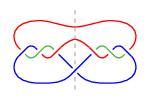}
	\caption{A 3-coloring of the partial knot $3_1$ (left) induces a 3-coloring of a symmetric union presentation of $6_1$ (right).}
	\label{fig:color1}
\end{figure}

The other inequality in Theorem~\ref{thm:main} is significantly more subtle:  Suppose $K$ has a $p$-colored symmetric union presentation $D\sqcup -D(n_1,\dots,n_k)$ such that for $1 \leq j \leq k$, both strands exiting $\Delta_j$ on the left are colored the same, and both strands exiting $\Delta_j$ on the right are colored the same.  We call such a $p$-coloring \emph{symmetrically compatible}.  In this case, the symmetrically compatible $p$-coloring induces a $p$-coloring of $J \# -J$ by removing the crossings in $\Delta_j$ and reconnecting the strands on the left and right.  See Figure~\ref{fig:color2} for an example in which $p=3$.

\begin{figure}[h!]
  \centering
   \includegraphics[width=.4\linewidth]{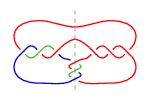} \qquad
 \includegraphics[width=.4\linewidth]{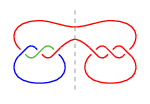}
	\caption{A symmetrically compatible 3-coloring of $9_{46}$ (left) induces a 3-coloring of $3_1 \# 3_1^*$ (right).}
	\label{fig:color2}
\end{figure}

Prytyckzi's proof of Proposition~\ref{prop:sum} reveals that every admissible $p$-coloring of the diagram $D \# -\!D$ is symmetrically compatible, which in this case means that the two strands crossing the axis of symmetry are colored the same.  One might hope that a similar statement could hold for symmetric union presentations.  If every $p$-coloring of the symmetric union $D\sqcup -D(n_1,\dots,n_k)$ were symmetrically compatible, we would get an injection from the set of admissible $p$-colorings of $K$ into the set of admissible $p$-colorings of $J \# -\!J$, which would imply $\col_p(K) \leq \col_p(J \# -\!J) = \frac{(\col_p(J))^2}{p}$, our desired inequality, by Proposition~\ref{prop:sum}.  However, this is not the case, as noted below.

\begin{proposition}\label{prop:notsymm}
There exists a knot $K$ with a 3-colorable symmetric union presentation that is not symmetrically compatible.
\end{proposition}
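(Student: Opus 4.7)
My plan is to exhibit $K=9_{46}$, shown in Figure~\ref{fig:steve} as the symmetric union $D\sqcup -D(3)$ with partial knot $J=3_1$, as a counterexample. The argument is a counting discrepancy: $\col_3(9_{46})$ will be shown to strictly exceed the number of symmetrically compatible $3$-colorings, so the latter cannot account for all $3$-colorings of $K$.

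The key claim is that symmetrically compatible $3$-colorings of $K$ are in bijection with $3$-colorings of $J$. In one direction, the observation preceding the proposition sends each $3$-coloring of $J$ to the symmetrically compatible coloring of $K$ obtained by pairing $D$ with an ``identical'' coloring of $-D$. In the other direction, any symmetrically compatible coloring of $K$ assigns common colors $c_0,c_1\in\Z_3$ to the axis disks $\Delta_0$ and $\Delta_1$; filling in the arcs of $D$ originally inside these disks with $c_0$ and $c_1$ recovers a $3$-coloring of $J=D$, and the coloring of $-D$ is then determined by the axis identifications. Here one uses that the color-transition matrix $M=\left(\begin{smallmatrix}2 & -1\\ 1 & 0\end{smallmatrix}\right)$ of a single twist crossing fixes every diagonal vector $(c,c)^T$ (since $2c-c=c$), so any twist region passes common colors unchanged from left to right. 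Hence the symmetrically compatible count equals $\col_3(J)=9$.

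Finally, $\col_3(9_{46})=27$, which follows from $H_1(\Sigma_2(9_{46});\Z)\cong\Z/3\oplus\Z/3$ together with the standard formula $\col_p(K)=p\cdot|\mathrm{Hom}(H_1(\Sigma_2(K);\Z),\Z_p)|$; alternatively, this can be verified directly from the Wirtinger presentation modulo $3$. Since $27>9$, at least $18$ admissible $3$-colorings of $D\sqcup -D(3)$ are not symmetrically compatible, proving the proposition.

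The main step requiring care is the explicit parameterization of symmetrically compatible colorings by $3$-colorings of $J$, together with the computation of $\col_3(9_{46})$. A more concrete alternative to the counting argument is simply to display one such non-symmetrically-compatible $3$-coloring on the specific diagram of Figure~\ref{fig:steve} and verify the Wirtinger relations crossing-by-crossing; the analysis above guarantees such a coloring exists.
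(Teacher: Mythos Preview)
Your argument is correct and takes a genuinely different route from the paper. The paper simply exhibits, in a figure, an explicit non--symmetrically-compatible $3$-coloring of a symmetric union presentation of $3_1\#-\!3_1$; you instead use $K=9_{46}=D\sqcup -D(3)$ and a counting argument. Your method is more conceptual and, once the bijection is nailed down, yields not just one but eighteen non--symmetrically-compatible colorings. The paper's method is shorter but depends on the reader checking a picture.

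Two steps in your bijection deserve more care. First, the claim that a symmetrically compatible coloring assigns a \emph{common} color $c_0$ at $\Delta_0$ is not part of the definition (which only constrains $\Delta_1,\dots,\Delta_k$). Once your transition-matrix observation forces all four $\Delta_1$-corners to share the color $c_1$, you may reinsert the $D$-arc at $\Delta_1$ and regard the $D$-side as a Fox coloring of a long knot; the standard fact that the two ends of a long knot receive equal colors then gives $c_0^{\text{top}}=c_0^{\text{bot}}$. Second, the claim that the $-D$ coloring is ``determined by the axis identifications'' uses that a trefoil coloring is determined by any two of its three arc colors (equivalently, $\nullity_3(M_D)=2$); this is true here but is specific to the trefoil and should be stated.

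With these points made explicit your bijection gives exactly $\col_3(J)=9$ symmetrically compatible colorings, and $\col_3(9_{46})=27>9$ finishes the proof. It is worth noting that the paper's weaker observation---that symmetrically compatible colorings inject into colorings of $J\#-\!J$---would \emph{not} suffice for your example, since $\col_3(3_1\#-\!3_1)=27$ as well; your sharper bound against $\col_3(J)$ is essential.
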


\begin{proof}
In Figure~\ref{fig:incompatible}, we include a 3-colored symmetric union presentation for $K = 3_1 \# -\!3_1$ with partial knot $3_1$ that is not symmetrically compatible.
\end{proof}

\begin{figure}[h!]
  \centering
   \includegraphics[width=.4\linewidth]{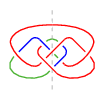}
	\caption{A 3-coloring of $3_1 \# -\!3_1$ that is not symmetrically compatible.}
	\label{fig:incompatible}
\end{figure}

We conclude that this naive approach will not work to prove Theorem~\ref{thm:main}, and so we turn to coloring matrices, defined and discussed below.

To compute $\text{col}_p(D)$ for a given diagram $D$, we define the $r \X r$ \emph{coloring matrix} $M_D$ with entries in $\Z_p$ as follows:  Enumerate the crossings $c_1,\dots,c_r$ of $D$ and the strands $s_1,\dots,s_r$ (noting that the number of crossings is equal to the number of strands).  The $ij$-th entry of $M_D$ is 1 if the crossing $c_i$ includes the strand $s_j$ as an understrand, $-2$ if $c_i$ includes $s_j$ as the overstrand, and 0 otherwise.  A vector $\vx \in \Z_p^n$ represents a $p$-coloring if and only if $M_D \cdot \vx = \vz$ (working modulo $p$).  Thus, letting $\text{nullity}_p(M_D)$ denote the mod $p$ nullity of $M_D$, we have
\[ \text{col}_p(D) = (\text{nullity}_p(M_D))^p.\]
It follows that despite the fact that a coloring matrix $M_D$ depends on the choice of diagram $D$, its mod $p$ nullity does not, yielding an invariant of a knot $K$.  Note that these conventions differ slightly from the \emph{mod $p$ rank} of a knot $K$ discussed in Chapter 3 of~\cite{livingston}, in which a row and column of $M_D$ are deleted.

\section{Proof of the main theorem}\label{sec:proof}

In this section, we analyze the coloring matrices corresponding to symmetric union presentations in order to prove Theorem~\ref{thm:main}.  We also prove Corollary~\ref{cor:main}, showing that there exist arbitrarily large collections of knots with the same determinant such that no two are symmetrically related.  Our main tool is the proposition below, which allows us to replace the coloring matrix coming from a symmetric union presentation with another matrix having the same mod $p$ nullity.

\begin{proposition}\label{prop:matrix}
Let $K$ be a symmetric ribbon knot with partial knot $J$.  Then there is a symmetric union presentation $D \sqcup -D(n_1,\dots,n_k)$ for $K$, with $D$ a diagram for $J$, such that the coloring matrix for $D \sqcup -D(n_1,\dots,n_k)$ has the same mod $p$ nullity as some matrix of the form

\[\left[ 
\begin{array}{c|c} 
  M_D & A \\ 
  \hline 
0 & M_D' 
\end{array} 
\right], \]

\noindent where the first column of the matrix $A$ is $\ve_1$, and $M_D'$ is obtained from $M_D$ by replacing its first column with the zero vector.
\end{proposition}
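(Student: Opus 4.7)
The plan is to write down the coloring matrix of a carefully chosen symmetric union presentation for $K$, in a convenient ordering of strands and crossings, and then to apply row and column operations (which preserve mod $p$ nullity) to reduce it to the claimed block form. I would fix a diagram $D$ for $J$ and enumerate its strands $s_1,\ldots,s_r$ and crossings $c_1,\ldots,c_r$ so that $s_1$ is the strand of $D$ passing through $\Delta_0$; let $s_i^*$ and $c_i^*$ denote the mirrored strands and crossings in $-D$. Each twist-region crossing in $\Delta_i$ breaks exactly one understrand, introducing one new strand, so there are $N = \sum n_i$ new strands $t_1,\ldots,t_N$ in total, making the full coloring matrix $M$ of size $(2r+N)\times(2r+N)$. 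I would order the rows and columns of $M$ so that the contributions from $D$, from $-D$, and from the twist regions are grouped into separate blocks.

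The first reduction is to eliminate the $N$ twist-region variables $t_j$. Each $t_j$ is the broken understrand at a unique twist-region crossing $\tilde c_j$, so the row of $M$ corresponding to $\tilde c_j$ has a pivotable entry in the $t_j$-column; solving that equation for $t_j$ and substituting into every other equation in which it appears produces an equivalent $2r \times 2r$ matrix on the variables $(s_1,\ldots,s_r,s_1^*,\ldots,s_r^*)$ with the same mod $p$ nullity as $M$. In this reduced matrix, the $D$- and $-D$-crossing rows are unchanged in their own $D$- and $-D$-strand columns (giving a natural top-left block equal to $M_D$ and a natural bottom-right block equal to a copy of $M_D$), while the off-diagonal blocks collect contributions propagated through the twist regions and through the $\infty$-tangle at $\Delta_0$.

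The crucial step, and the main obstacle, is to show that further row operations reduce this matrix to the specific form in the proposition statement. Here the $\infty$-tangle plays a central role: it identifies the strands $s_1$ and $s_1^*$ as a single strand of the symmetric union, which manifests as a column relation allowing one to treat $s_1^*$ as effectively equivalent to $s_1$. Using this identification together with the mirror symmetry of the construction, I would subtract appropriate $D$-rows from their paired $-D$-rows so that the $D$-contributions to the $-D$-equations (introduced by the twist-region substitutions) cancel, clearing the bottom-left block to $0$; arranging the same operations to collect all $s_1^*$-contributions into the first $D$-row yields the $\ve_1$ in the first column of $A$, while zeroing out the first column of the bottom-right block produces $M_D'$ there. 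Verifying that these cancellations happen cleanly will require a careful case analysis for each twist region, using the mirror symmetry of how the twist-region relations act on their $D$- and $-D$-boundary strands to ensure that the bookkeeping for the two halves decouples symmetrically along the axis.
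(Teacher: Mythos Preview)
Your broad strategy—build the coloring matrix of the symmetric union and reduce it by row/column operations—matches the paper, but the proposal rests on a structural claim that is false, and it omits a normalization that the paper's argument needs.

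The central error is the assertion that after eliminating the twist-region variables, ``the $D$- and $-D$-crossing rows are unchanged in their own $D$- and $-D$-strand columns,'' giving $M_D$ as the top-left block. When a crossing is inserted in $\Delta_j$, the strand $s_i$ of $D$ meeting $\Delta_j$ is cut into two arcs $a_i$ and $b_i$; only one of them survives as a strand of the symmetric union on the $D$-side, while the other is fused across the axis with a piece of $s_i^*$ to form the ``new'' strand. So the column you retain in the ``$s_i$ position'' carries only the partial vector $\va_i$ (or $\vb_i$), not $\vs_i=\va_i+\vb_i$, and the top-left block is \emph{not} $M_D$. Relatedly, the $\infty$-tangle at $\Delta_0$ does not identify $s_1$ with $s_1^*$; it cuts both and produces two axis-crossing strands with columns $(\va_1,\va_1)$ and $(\vb_1,\vb_1)$. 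Recovering genuine $M_D$-columns from these half-columns requires explicit column operations recombining $\va_i$ and $\vb_i$ into $\vs_i$, and this is the technical heart of the paper's proof; your outline skips it and the appeal to ``mirror symmetry'' in the last paragraph does not substitute for it.

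The paper also begins with a preparatory step you omit: Reidemeister~1 moves are used to arrange that every $n_i=\pm 1$, that each strand of $D$ meets at most one disk $\Delta_j$, and that the strand $s_1$ through $\Delta_0$ terminates immediately outside $\Delta_0$ with no overcrossings (so that $\va_1=\ve_1$). These normalizations are what make the half-column bookkeeping tractable and are what ultimately force the first column of $A$ to be exactly $\ve_1$; without them the ``careful case analysis'' you allude to would have to handle arbitrarily long twist regions and strands meeting several $\Delta_j$, which is substantially harder.
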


\begin{proof}
Suppose $K$ has a symmetric union presentation $D \sqcup -D(n_1,\dots,n_k)$.  After performing Reidemeister 1 moves on $D$ if necessary (and at the expense of increasing $k$), we can assume that $n_i = \pm 1$ for all $i$.  Letting $\Delta_0,\dots,\Delta_k$ be the disks described in Section~\ref{sec:prelim}, we can perform additional Reidemeister moves to ensure that each strand $s_i$ of $D$ meets at most one disk $\Delta_j$, chosen so that the strand $s_1$ (after relabeling if necessary) passing through $\Delta_0$ terminates immediately outside of $\Delta_0$ and without passing through any overcrossings.  In an abuse of notation, we let $D \sqcup -D(n_1,\dots,n_k)$ denote the new symmetric union presentation.  The setup is shown in Figure~\ref{fig:R1}.

\begin{figure}[h!]
  \centering
  \includegraphics[width=.44\linewidth]{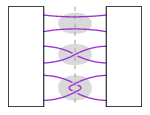} \quad \raisebox{2.5cm}{$\longrightarrow$} \quad
    \includegraphics[width=.44\linewidth]{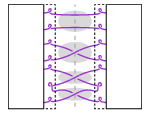}
	\caption{After performing R1 moves on the symmetric union presentation at left, each parameter $n_i$ at right is $\pm 1$, and each strand intersects at most one disk $\Delta_j$ at most once.}
	\label{fig:R1}
\end{figure}

Let $\vs_1,\dots,\vs_r$ be the column vectors of the coloring matrix $M_D$.  By construction, $\vs_1$ corresponds to the strand passing through $\Delta_0$ and has two nonzero entries of $+1$.  Every other column vector $\vs_i$ has two nonzero entries of $+1$ and some number (possibly zero) of entries of $-2$ corresponding to the overcrossings contained in the strand $s_i$.  Now, we break each strand $s_i$ passing through some $\Delta_j$ into the union of two strands $a_i$ and $b_i$, where $a_i$ exits $\Delta_j$ above $b_i$, as shown in at left in Figure~\ref{fig:cut}.  In addition, we express $\vs_i$ as the sum of vectors $\va_i$ and $\vb_i$, where $\va_i$ records the crossing information from the strand $a_i$ and $\vb_i$ records the crossing information from the strand $b_i$.

\begin{figure}[h!]
  \centering
  \includegraphics[width=.2\linewidth]{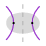} \quad \raisebox{1.4cm}{$\longrightarrow$}\quad
    \includegraphics[width=.2\linewidth]{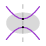} \quad
      \includegraphics[width=.2\linewidth]{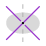} \quad
        \includegraphics[width=.2\linewidth]{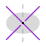}
          \put (-419,6) {\textcolor{Violet}{$b_i$}}
                   \put (-419,76) {\textcolor{Violet}{$a_i$}}
          \put (-340,6) {\textcolor{Violet}{$b_i^*$}}
                   \put (-340,76) {\textcolor{Violet}{$a_i^*$}}
                             \put (-289,6) {\textcolor{Violet}{$b_i$}}
                   \put (-289,76) {\textcolor{Violet}{$a_i$}}
          \put (-210,6) {\textcolor{Violet}{$b_i^*$}}
                   \put (-210,76) {\textcolor{Violet}{$a_i^*$}}
                                                \put (-188,6) {\textcolor{Violet}{$b_i$}}
                   \put (-188,76) {\textcolor{Violet}{$a_i$}}
          \put (-109,6) {\textcolor{Violet}{$b_i^*$}}
                   \put (-109,76) {\textcolor{Violet}{$a_i^*$}}
                                                                   \put (-87,6) {\textcolor{Violet}{$b_i$}}
                   \put (-87,76) {\textcolor{Violet}{$a_i$}}
          \put (-8,6) {\textcolor{Violet}{$b_i^*$}}
                   \put (-8,76) {\textcolor{Violet}{$a_i^*$}}
	\caption{Cutting and reassembling strands in a a symmetric union presentation}
	\label{fig:cut}
\end{figure}

Let $-D$ be the mirror image of $D$, with strands $s_i^*$ broken into $a_i^*$ and $b_i^*$ and corresponding vectors $\vs_i^* = \va_i^* + \vb_i^*$.  Note that $\vs_i^* = \vs_i$, $\va_i^* = \va_i$, and $\vb_i^* = \vb_i$, and thus $M_{-D} = M_D$.  The notation may seem redundant, but it will be important for us to keep track of the information coming from $D$ and the information coming from $-D$ as we show how to modify a coloring matrix for $D \sqcup -D$ to get a coloring matrix for $D \sqcup -D(n_1,\dots,n_k)$.  To begin, observe that
\[ M_{D \sqcup -D} = 
\left[ \begin{array}{c|c} 
  M_D & 0 \\ 
  \hline 
0 & M_D
\end{array} 
\right]. \]

Replacing the tangle in $\Delta_0$ with the $\infty$-tangle connects strands $a_i$ and $a_i^*$ and $b_i$ and $b_i^*$.  The effect on $M_{D \sqcup -D}$ is

\[
    \begin{bmatrix}
    \,\,\begin{matrix}
        \vs_1 \\
        \hline
       \vec{0} 
    \end{matrix}\,\,\,\,
    \cdots
    \,\,\,\,\begin{matrix}
        \vline & \vec{0} \\
        \hline
        \vline & \vs_1^* 
    \end{matrix}\,\,\,\,
    \cdots
    \end{bmatrix}
    =
    \begin{bmatrix}
    \,\,\begin{matrix}
         \va_1 + \vb_1 \\
        \hline
        \vec{0}
    \end{matrix}\,\,\,\,
    \cdots
    \,\,\,\,\begin{matrix}
        \vline & \vec{0} \\
        \hline
        \vline & \va_1^*+ \vb_1^*
    \end{matrix}\,\,\,\,
    \cdots
    \end{bmatrix}
    \\\to
    \begin{bmatrix}
    \,\,\begin{matrix}
         \va_1\\
        \hline
      \va_1^* 
    \end{matrix}\,\,\,\,
    \cdots
    \,\,\,\,\begin{matrix}
        \vline & \vb_1 \\
        \hline
        \vline & \vb_1^* 
    \end{matrix}\,\,\,\,
    \cdots
    \end{bmatrix}
\]

In other words, viewing the columns of $M_{D \sqcup -D}$ as vectors in $\Z_p^r \X \Z_p^r$, the first column $(\vs_1,\vec 0)$ is replaced with $(\va_1,\va_1^*)$, and column $r+1$, initially $(\vec 0,\vs_1^*)$, is replaced with $(\vb_1,\vb_1^*)$.

Next, we examine the change induced by adding a $+1$-crossing to the strands $s_i$ and $s_i^*$ along the axis of symmetry $\ell$ within the disk $\Delta_j$.  This addition adds a crossing and converts two strands into three strands, which will append a new row and column to our coloring matrix.  We set the convention that $s_i$ is replaced with $a_i$, $s_i^*$ is replaced with $b_i^*$, and the new strand, $t_j$ is obtained by joining $b_i$ to $a_i^*$, as shown in Figure~\ref{fig:cut}.  The new coloring matrix has an extra row with three non-zero entries, an entry of $+1$ in the columns corresponding to the replacements of $s_i$ and $s_i^*$ and an entry of $-2$ in the column corresponding to $t_j$.  This change is depicted below.

\begin{align*}
    \begin{bmatrix}
    \cdots
    \,\,\,\,\begin{matrix}
     \va_i + \vb_i \\
        \hline
      \vec{0}
    \end{matrix}\,\,\,\,
    \cdots
    \,\,\,\,\begin{matrix}
     \vline\\
        \hline
 \vline
    \end{matrix}\,\,\,\,
    \cdots
    \,\,\,\,\begin{matrix}
        \vec{0} \\
        \hline
       \va_i^* + \vb_i^* 
    \end{matrix}\,\,\,\,
    \cdots
    \end{bmatrix}
    \to&
    \begin{bmatrix}
\cdots
    \,\,\,\,\begin{matrix}
     \va_i \\
        \hline
      \vec{0} \\
      \hline
      1
    \end{matrix}\,\,\,\,
    \cdots
    \,\,\,\,\begin{matrix}
     \vline\\
        \hline
 \vline \\
 \hline \\
    \end{matrix}\,\,\,\,
    \cdots
    \,\,\,\,\begin{matrix}
        0 \\
        \hline
       \vb_i^* \\
       \hline
       1
    \end{matrix}\,\,\,\,
    \cdots
    \,\,\,\,
    \begin{matrix}
        \vline & \vb_i \\
        \hline
        \vline & \va_i^* \\
        \hline
         & -2
    \end{matrix}
    \end{bmatrix}.
\end{align*}

Similarly, we can examine the change induced by adding a $-1$-crossing to the strands $s_i$ and $s_i^*$ along the axis of symmetry $\ell$ within the disk $\Delta_j$.  As above, we append a new row and column to our coloring matrix.  We replace $s_i$ with $b_i$, strand $s_i^*$ is replaced with $a_i^*$, and the new strand, $t_j$ is obtained by joining $a_i$ to $b_i^*$, also shown in Figure~\ref{fig:cut}.  The new coloring matrix has an extra row with three non-zero entries, an entry of $+1$ in the columns corresponding to the replacements of $s_i$ and $s_i^*$ and an entry of $-2$ in the column corresponding to $t_j$, as shown below.

\begin{align*}
    \begin{bmatrix}
    \cdots
    \,\,\,\,\begin{matrix}
     \va_i + \vb_i \\
        \hline
      \vec{0}
    \end{matrix}\,\,\,\,
    \cdots
    \,\,\,\,\begin{matrix}
     \vline\\
        \hline
 \vline
    \end{matrix}\,\,\,\,
    \cdots
    \,\,\,\,\begin{matrix}
        \vec{0} \\
        \hline
       \va_i^* + \vb_i^* 
    \end{matrix}\,\,\,\,
    \cdots
    \end{bmatrix}
    \to&
    \begin{bmatrix}
\cdots
    \,\,\,\,\begin{matrix}
     \vb_i \\
        \hline
      \vec{0} \\
      \hline
      1
    \end{matrix}\,\,\,\,
    \cdots
    \,\,\,\,\begin{matrix}
     \vline\\
        \hline
 \vline \\
 \hline \\
    \end{matrix}\,\,\,\,
    \cdots
    \,\,\,\,\begin{matrix}
        0 \\
        \hline
       \va_i^* \\
       \hline
       1
    \end{matrix}\,\,\,\,
    \cdots
    \,\,\,\,
    \begin{matrix}
        \vline & \va_i \\
        \hline
        \vline & \vb_i^* \\
        \hline
         & -2
    \end{matrix}
    \end{bmatrix}.
\end{align*}

After making these changes for every disk $\Delta_1,\dots,\Delta_k$, we obtain a $(2r+k,2r+k)$-coloring matrix $M_1$ corresponding to $D \sqcup -D(n_1,\dots,n_k)$.  Since row and column operations preserve the nullity of a matrix, we can perform these operations freely.  We will express the columns of $M_1$ as vectors in $\Z_p^r \X \Z_p^r \X \Z_p^k$ to reflect the block structure above, and we drop the $*$ notation, recalling that $\va_i^* = \va_i$ and $\vb_i^* = \vb_i$.  To summarize the discussion above, if a disk $\Delta_j$ contains a $+1$-crossing, then we convert columns $i$ and $r+i$ of $M_{D \sqcup D}$, which are $(\va_i+\vb_i,\vec 0)$ and $(\vec 0,\va_i+\vb_i)$, respectively, into columns $i$, $r+i$, and $2r+j$, $(\va_i,\vec 0, \ve_j)$, $(\vec 0, \vb_i, \ve_j)$, and $(\vb_i,\va_i, -2\ve_j)$.  A sequence of column operations replaces the three vectors with $(\va_i,-\vb_i, \vec 0)$, $(\vec 0, \vb_i, \ve_j)$, and $(\va_i + \vb_i,\va_i + \vb_i, \vec 0)$, which we permute to be in columns $i$, $2r+j$, and $r+i$, respectively.

On the other hand, if $\Delta_j$ contains a $-1$-crossing, the corresponding induced columns are $(\vb_i,\vec 0, \ve_j)$, $(\vec 0, \va_i, \ve_j)$, and $(\va_i, \vb_i, -2\ve_j)$.  Column operations send these vectors to $(\vb_i,-\va_i, \vec 0)$, $(\vec 0, \va_i, \ve_j)$, and $(\va_i + \vb_i,\va_i + \vb_i, \vec 0)$, which we again permute to be in columns $i$, $2r+j$, and $r+i$, respectively.  The result is a matrix we will call $M_2$, which is a block matrix of the form
\[M_2 = \left[ 
\begin{array}{c|c} 
  M_3 & B \\ 
  \hline 
0 & I_k 
\end{array} 
\right], \]
where $M_3$ is a $(2r \X 2r)$-matrix.

Note that the final $k$ columns of $M_2$ are linearly independent, due to the $I_k$ block in the bottom right, and none of the first $2r$ columns are in the span of these columns; thus, $\text{rk}(M_2) = \text{rk}(M_3) + k$, which implies that $\nullity(M_3) = \nullity(M_2)$.  By construction, $M_3$ is a $2r \X 2r$-matrix such that column one is $(\va_1,\va_1) = (\ve_1,\ve_1)$, column $r+1$ is $(\vb_1,\vb_1)$, columns two through $r$ are of the form $(\va_i,-\vb_i)$, $(\vb_i,-\va_i)$, or $(\va_i + \vb_i, \vec 0)$ (corresponding to strands $s_i$ that meet no disk $\Delta_j$), and columns $r+2$ through $2r$ are of the form $(\va_i + \vb_i,\va_i + \vb_i)$ or $(\vec 0, \va_i + \vb_i)$ (corresponding to strands $s_i$ that meet no disk $\Delta_j)$.

We perform column operations on $M_3$:  First, we replace the pair $(\va_1,\va_1)$ and $(\vb_1,\vb_1)$ with $(\va_1,\va_1)$ and $(\va_1 + \vb_1,\va_1 + \vb_1)$.  Next, we replace the pair $(\va_i + \vb_i, \vec 0)$ and $(\vec 0, \va_i + \vb_i)$ with $(\va_i + \vb_i, \vec 0)$ and $(\va_i + \vb_i,\va_i + \vb_i)$.  Finally, we exchange column $i$ with column $i+r$, where $1 \leq i \leq r$.  The result, a matrix we call $M_4$, has columns 1 through $r$ of the form $(\va_i + \vb_i,\va_i + \vb_i) = (\vs_i,\vs_i)$, column $r+1$ of the form $(\va_1,\va_1) = (\ve_1,\ve_1)$, and columns $r+2$ through $2r$ of the form $(\va_i,-\vb_i)$, $(\vb_i,-\va_i)$, or $(\va_i + \vb_i, \vec 0)$.  In other words, $M_4$ can be expressed as a block matrix
\[M_4 = \left[ 
\begin{array}{c|c} 
  M_D & C \\ 
  \hline 
M_D & D 
\end{array} 
\right]. \]
Now, we perform row operations on $M_4$.  First, subtract row $i$ from row $r+i$, so that the resulting columns are of the forms $(\vs_i,\vec 0)$, $(\va_1,\vec 0)$, and $(\va_i,-\va_i-\vb_i)$, $(\vb_i,-\va_i-\vb_i)$, or $(\va_i + \vb_i,-\va_i-\vb_i)$.  Next, scale rows $r+1$ through $2r$ by a factor of $-1$, and then add row $r+i$ to row $i$ for $1\leq i \leq r$.  The resulting matrix, call it $M_5$, has columns 1 through $r$ of the form $(\vs_i,\vec 0)$, column $r+1$ of the form $(\ve_1,\vec 0)$, and columns $r+2$ through $2r$ of the form $(-\vb_i,\va_i + \vb_i) = (-\vb_i,\vs_i)$, $(-\va_i,\va_i + \vb_i) = (-\va_i,\vs_i)$, or $(\vec 0, \va_i+\vb_i) = (\vec 0 , \vs_i)$.  In other words, $M_5$ can be expressed as a block matrix
\[M_5 = \left[ 
\begin{array}{c|c} 
  M_D & A \\ 
  \hline 
0 & M_D' 
\end{array} 
\right]. \]
By construction, the first column of $A$ is $\ve_1$, and $M_D'$ is obtained from $M_D$ by replacing its first column with the zero vector, completing the proof.
\end{proof}

\begin{remark}\label{rmk:sum}
If we carry out the proof of Proposition~\ref{prop:matrix} with $J \# -\!J$, in which the only tangle replacement is carried out in $\Delta_0$, we see that $J \# -\!J$ has a coloring matrix with the same mod $p$ nullity as
\[M ' = \left[ 
\begin{array}{c|c} 
  M_D & A' \\ 
  \hline 
0 & M_D' 
\end{array} 
\right],\]
where $D$ is a diagram for $J$, and the only nonzero entry in the matrix $A'$ is $a_{11} = 1$.  By Proposition~\ref{prop:sum}, we have that $\col_p(J \# -\!J) = \frac{(\col_p(J))^2}{p}$, which implies
\[\nullity_p(M') = 2 \cdot \nullity_p(M_D) - 1.\]
\end{remark}

We have now assembled the ingredients we need to prove our main theorem.

\begin{proof}[Proof of Theorem~\ref{thm:main}]
Suppose that $K$ admits a symmetric union with partial knot $J$.  By Proposition~\ref{prop:matrix}, there is an $r$-crossing diagram $D$ for $J$ such that any coloring matrix for $K$ has the same mod $p$ nullity as a $2r \times 2r$ block matrix $M$ of the form
\[ M = \left[ 
\begin{array}{c|c} 
  M_D & A \\ 
  \hline 
0 & M_D' 
\end{array} 
\right],\]
in which $A$ is a matrix with $\ve_1$ as its first column, and $M_D'$ is obtained from $M_D$ by replacing its first column with the zero vector.  Note that
\[ \col_p(K) = (\nullity_p (M))^p.\]
Since the lower left block of $M$ is the zero block, we can use a basis for the null space of $M_D$ to construct a linearly independent set of vectors of the same size in the null space of $M$; thus
\begin{equation}\label{eq1}
\nullity_p(M_D) \leq \nullity_p(M).
\end{equation}
\noindent (This inequality corresponds to the fact that any $p$-coloring of a partial knot $J$ induces a $p$-coloring of $K$, as noted in Section~\ref{sec:prelim}.)

On the other hand, by Remark~\ref{rmk:sum}, the matrix
\[M ' = \left[ 
\begin{array}{c|c} 
  M_D & A' \\ 
  \hline 
0 & M_D' 
\end{array} 
\right],\]
where $A'$ has only one non-zero entry $a_{11}=1$, satisfies $\nullity(M') = 2 \cdot \nullity(M_D) - 1$.  Due to the block structure of $M'$ and the zero block in the lower left, a sequence of row operations converting $M'$ to row echelon form will only combine the first $r$ rows with each other and the second $r$ rows with each other.  As such, the same row operations performed on $M$ will result in a pivot element in every column corresponding to a pivot in the row echelon form of $M'$.  It follows that $\text{rk}(M) \geq \text{rk}(M')$, and so the rank-nullity theorem implies
\begin{equation}\label{eq2}
\nullity_p(M) \leq \nullity_p(M') = 2 \cdot \nullity_p(M_D) - 1.
\end{equation}
Combining Equations (\ref{eq1}) and (\ref{eq2}) yields
\[ \nullity_p(M_D) \leq \nullity_p(M) \leq 2 \cdot \nullity_p(M_D) - 1,\]
or equivalently,
\[ \col_p(J) \leq \col_p(K) \leq \frac{(\col_p(J))^2}{p},\]
completing the proof.
\end{proof}

\begin{remark}\label{rmk:range}
We note that either inequality from Theorem~\ref{thm:main} can be equality.  Consider the examples shown in Figure~\ref{fig:steve}.  In this case, the partial knot $J$ is the trefoil, with $\col_3(J) = 9$, and two different ribbon knots $K = 6_1$ and $K' = 9_{46}$ are constructed with partial knot $J$.  We leave it to the reader to compute that $\col_3(K) = 9$ while $\col_3(K') = 27$, realizing the lower and upper bounds from Theorem~\ref{thm:main}.
\end{remark}

\begin{proof}[Proof of Corollary~\ref{cor:main}]
For any odd prime $p$, let $T_p$ denote the $(p,2)$-torus knot.  It is well-known that $\det(T_p) = p$ and $T_p$ is 2-bridge, and thus Lemma~\ref{lem:detcol} and Lemma~\ref{lem:bridge} imply $\col_p(T_p) = p^2$.  Define $K_p = T_p \# T_p \# T_p \# T_p$.  Then by repeated applications of Proposition~\ref{prop:sum}, we have $\col_p(K_p) = p^5$, and since knot determinant is multiplicative under connected sum, $\det(K_p) = p^4$.  On the other hand, let $J_p$ denote the $(p^4,2)$-torus knot.  Then $J_p$ is 2-bridge with $\det(J_p) = p^4$, and so again we have $\col_p(J_p) = p^2$ by applying Lemmas~\ref{lem:detcol} and~\ref{lem:bridge}.  While $\det(K_p) = \det(J_p)$, it follows immediately from Theorem~\ref{thm:main} that $K_p$ and $J_p$ are not symmetrically related, since any knot $K$ with a symmetric union presentation with partial knot $J_p$ satisfies $\col_p(K) \leq \frac{(\col_p(J_p))^2}{p} = p^3$, while any knot $K'$ with symmetric union presentation with partial knot $K_p$ satisfies $\col_p(K') \geq \col_p(K_p) = p^5$.

Now, let $p_1,\dots,p_k$ be $k$ distinct odd primes, and for any tuple $\mathbf{x} = (x_1,\dots,x_m) \in \Z_2^m$, define the knot $K_{\mathbf x}$ by 
\[ K_{\mathbf x} = \#_{i=1}^m C_i,\]
where $C_i = J_{p_i}$ if $x_i = 0$ and $C_i = K_{p_i}$ if $x_i = 1$.  Since $\det(J_{p_i}) = \det(K_{p_i}) = p_i^4$, we have that for any $\mathbf x$,
\[ \det(K_{\mathbf x}) = p_1^4 \cdot \dots \cdot p_m^4.\]
Moreover, note that if $j \neq i$, then $p_i$ is not a factor of $\det(C_j) = p_j^4$, and so $C_j$ is not $p_i$-colorable by Lemma~\ref{lem:detcol}.  It follows from Corollary~\ref{cor:notcolor} that
\begin{equation}\label{eq3}
\col_{p_i}(K_{\mathbf x}) = \col_{p_i}(C_i).
\end{equation}
Finally, suppose that $\mathbf x' \neq \mathbf x$.  Then there is some index $i$ at which they disagree, and we can suppose without loss of generality that the summand $C_i$ of $K_{\mathbf x}$ satisfies $C_i= J_{p_i}$, while the corresponding summand $C_i'$ of $K_{\mathbf x'}$ satisfies $C_i' = K_{p_i}$.  By Equation~\ref{eq3}, we have $\col_{p_i}(K_{\mathbf x}) = \col_{p_i}(J_{p_i}) = p_i^2$, while $\col_{p_i}(K_{\mathbf x'}) = \col_{p_i}(K_{p_i}) = p_i^5$, and by an argument identical to the one for $J_p$ and $K_p$ above, we conclude that $K_{\mathbf x}$ and $K_{\mathbf x'}$ are not symmetrically related.  As $|\Z_2^m| = 2^m$, this completes the proof.
\end{proof}

\begin{remark}
We can also use Theorem~\ref{thm:main} to find knots prime knots in the knot table~\cite{knotinfo} with the same determinant that are not symmetrically related.  As one example, both $8_{10}$ and $12n_{642}$ have determinant $27$, but we can compute $\col_3(8_{10}) = 9$, while $\col_3(12n_{642}) = 81$.  It follows that if $K$ has $8_{10}$ as a partial knot, then $\col_3(K) \leq 27$, where if $K'$ has $12n_{642}$ as a partial knot, then $\col_3(K') \geq 81$, and so $8_{10}$ and $12n_{642}$ are not symmetrically related.
\end{remark}

\section{Questions}\label{sec:quest}

We conclude with questions to stimulate research in this area.  One natural question is to push this study further to find additional obstructions to two knots being symmetrically related.

\begin{question}
Do there exist knots $J$ and $J'$ such that $\det(J) = \det(J')$ and $\col_p(J) = \col_p(J')$ for all $p$ but such that $J$ and $J'$ are not symmetrically related?
\end{question}

Although the terminology suggests that being symmetrically related is an equivalence relation, transitivity is not immediately clear.

\begin{question}
Suppose $J$ is symmetrically related to $J'$ and $J'$ is symmetrically related to $J''$.  Is $J$ necessarily symmetrically related to $J''$?
\end{question}

We suspect that symmetrically related knots are somewhat rare, and so we pose the following.

\begin{question}
Does there exist a knot $J$ that is symmetrically related to infinitely many other knots?  Or is every knot $J$ symmetrically related to only finitely many knots?
\end{question}

In light of Proposition~\ref{prop:notsymm}, we know that there exist symmetric union presentations with $p$-colorings that are not symmetrically compatible.  However, this example corresponds to the knot $3_1 \# -\!3_1$, which has a different and more natural symmetric union presentation in which all $p$-colorings are indeed symmetrically compatible.  This suggests

\begin{question}\label{qsymm}
If $K$ is a symmetric ribbon knot, does there exist a symmetric union presentation for $K$ such that all $p$-colorings are symmetrically compatible?
\end{question}

An affirmative answer to Question~\ref{qsymm} could provide an additional tool to further our understanding of Question~\ref{q1}, truly the main problem underlying the study of symmetric unions.

\bibliographystyle{amsalpha}
\bibliography{partialknots}

\end{document}